\newtheorem{theorem}{\sc Theorem}[section]
\newtheorem{lemma}[theorem]{\sc Lemma}
\newtheorem{proposition}[theorem]{\sc Proposition}
\newtheorem{hypothesis}[theorem]{\sc Hypothesis}
\begin{document}

\author{Pavel Shumyatsky}
\address{Department of Mathematics, University of Brasilia,
Brasilia-DF, 70910-900 Brazil ,pavel@unb.br}
\email{pavel@mat.unb.br}

\title{On the Exponent of a Verbal Subgroup in a Finite Group}

\begin{abstract}
Let $w$ be a multilinear commutator word. We prove that if $e$ is a positive integer and $G$ is a finite group in which any nilpotent subgroup generated by $w$-values has exponent dividing $e$ then the exponent of the corresponding verbal subgroup $w(G)$ is bounded in terms of $e$ and $w$ only.
\end{abstract}

\maketitle

\section{Introduction}

A number of outstanding results about words in finite groups have been obtained in recent years. In this context we mention Shalev's theorem that for any non-trivial group word $w$, every element of every sufficiently large finite simple group is a product of at most three $w$-values \cite{shal}, and the proof by Liebeck, O'Brien, Shalev and Tiep \cite{lost} of Ore's conjecture: Every element of a finite simple group is a commutator. Another significant result is that of Nikolov and Segal that if $G$ is an $m$-generated finite group, then every element of $G'$ is a product of $m$-boundedly many commutators \cite{nisegal}. 

Our interest in words began in \cite{ams99} where it was shown that if $G$ is a residually finite group in which all commutators have orders dividing a given prime-power, then the derived group $G'$ is locally finite. Later, in \cite{lola,vari,comu68,deltacom} we treated other problems on local finiteness of verbal subgroups in residually finite groups. Inevitably, at a crucial point we had to deal with questions about the exponent of a verbal subgroup of a finite group.   

Recall that a group has exponent $e$ if $x^e=1$ for all $x\in G$ and $e$ is the least positive integer with that property. Given a word $w$, we denote by $w(G)$ the verbal subgroup of $G$ generated by the values of $w$. The goal of the present paper is to prove the following theorem.
\medskip

{Theorem A.} {\it Let $w$ be a multilinear commutator and $G$ a finite group in which any nilpotent subgroup generated by $w$-values has exponent dividing $e$. Then the exponent of the verbal subgroup $w(G)$ is bounded in terms of $e$ and $w$ only.}
\medskip

This result provides a potentially useful tool for reduction of questions on finite groups to those on nilpotent groups. Historically, tools of this nature played important role in solutions of various problems in group theory, most notably the restricted Burnside problem \cite{hahi}.

Multilinear commutators (outer commutator words) are words which are obtained by nesting commutators, but using always different indeterminates. Thus the word $[[x_1,x_2],[x_3,x_4,x_5],x_6]$ is a multilinear commutator while the Engel word
$[x_1,x_2,x_2,x_2]$ is not.
An important family of multilinear commutators are the simple commutators $\gamma_k$, given by $$\gamma_1=x_1, \qquad \gamma_k=[\gamma_{k-1},x_k]=[x_1,\ldots,x_k].$$

The corresponding verbal subgroups $\gamma_k(G)$ are the terms of the lower central series of $G$. Another distinguished sequence of outer commutator words are the derived words $\delta_k$, on $2^k$ indeterminates, which are defined recursively by
$$
\delta_0=x_1,
\qquad
\delta_k=[\delta_{k-1}(x_1,\ldots,x_{2^{k-1}}),\delta_{k-1}(x_{2^{k-1}+1},\ldots,x_{2^k})].$$
Then $\delta_k(G)=G^{(k)}$, the $k$th derived subgroup of $G$. The values of the word $\delta_k$ sometimes will be called $\delta_k$-commutators.

Let $G$ be a finite group and $P$ a Sylow $p$-subgroup of $G$. An immediate corollary of the Focal Subgroup Theorem \cite[Theorem 7.3.4]{go} is that $P\cap G'$ is generated by commutators. From this one immediately deduces that if all nilpotent subgroups generated by commutators have exponent dividing $e$, then the exponent of the derived group $G'$ divides $e$, too. Thus, the case of Theorem A where $w=[x,y]$ is pretty easy. The proof of the general case uses a number of sophisticated tools, though. In particular it uses the classification of finite simple groups and Zelmanov's solution of the restricted Burnside problem \cite{ze1,ze2}.

\section{Exponent of $G^{(k)}$ in the case of soluble groups}

Throughout the paper we use the expression ``$\{a,b,c,\ldots\}$-bounded" to mean ``bounded from above by some function depending only on $a,b,c, \ldots$". If $A$ is a group of automorphisms of a group $G$, we denote by  $[G,A]$ the subgroup generated by all elements of the form $x^{-1}x^{a}$, where $x\in G,a\in A$. It is well-known that $[G,A]$ is a normal subgroup of $G$. For the proof of the next lemma see for example \cite[6.2.2, 6.2.4]{go}).

\begin{lemma}\label{1}
Let $A$ be a group of automorphisms of a finite group $G$ with
$(|A|,|G|) = 1$.
\begin{enumerate}
\item If $N$ is an $A$-invariant normal subgroup of $G$, then $C_{G/N}(A)=C_{G}(A)N/N$.
\item $G = [G,A]C_{G}(A)$.
\item $[G,A] = [G,A,A]$.
\end{enumerate}
\end{lemma}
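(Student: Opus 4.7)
The plan is to establish part (1) first, using a standard coprime cocycle / Schur--Zassenhaus argument, and then deduce (2) and (3) as direct consequences. This is a classical package of results (as the paper itself indicates by citing Gorenstein 6.2.2, 6.2.4), so my proof would essentially reproduce the standard argument.

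For part (1), one inclusion is immediate: $C_G(A)N/N \subseteq C_{G/N}(A)$. For the converse, take $g \in G$ with $gN \in C_{G/N}(A)$ and consider the map $\phi \colon A \to N$ given by $\phi(a) = g^{-1} g^a$ (which lands in $N$ since $gN$ is $A$-fixed). Using $g^{ab} = (g^a)^b$, a short calculation shows $\phi(ab) = \phi(a)\, \phi(b)^a$, so $\phi$ is a $1$-cocycle. Since $(|A|,|N|) = 1$, the first cohomology $H^1(A,N)$ vanishes; concretely, this follows by applying Schur--Zassenhaus in $N \rtimes A$, where all complements to $N$ are $N$-conjugate. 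Hence $\phi$ is a coboundary: there is $n \in N$ with $g^{-1}g^a = n^{-1}n^a$ for every $a \in A$, which rearranges to $(gn^{-1})^a = gn^{-1}$, so $gn^{-1} \in C_G(A)$ and $g \in C_G(A)N$.

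Part (2) follows from (1) applied with $N = [G,A]$. For every $g \in G$ and $a \in A$ one has $g^a = g \cdot (g^{-1}g^a) \in g[G,A]$, so $A$ acts trivially on $G/[G,A]$. Therefore $C_{G/[G,A]}(A) = G/[G,A]$, and (1) rewrites this equality as $G = C_G(A)[G,A]$. Part (3) then follows from (2) and the commutator identity $[xy,a] = [x,a]^y [y,a]$: writing an arbitrary generator $[g,a]$ of $[G,A]$ with $g = ch$, $c \in C_G(A)$, $h \in [G,A]$, one gets $[g,a] = [c,a]^h [h,a] = [h,a] \in [[G,A],A]$, since $[c,a] = 1$. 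Thus $[G,A] \subseteq [[G,A],A]$, and the reverse inclusion is trivial.

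The genuine content is part (1); parts (2) and (3) are formal manipulations once (1) is in hand. The main obstacle is therefore the coprime cocycle argument, which ultimately rests on Schur--Zassenhaus. No novel idea is needed to give a complete proof from scratch; the only choice is whether to phrase the key step cohomologically or as a complement-conjugacy statement in $N \rtimes A$.
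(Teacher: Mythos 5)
Your proof is correct and is exactly the standard coprime-action argument that the paper itself relies on by citing Gorenstein 6.2.2 and 6.2.4 (the paper gives no proof of its own), so there is nothing to compare beyond noting agreement. One trivial bookkeeping slip: with the stated convention $g^{ab}=(g^a)^b$ the cocycle identity comes out as $\phi(ab)=\phi(b)\,\phi(a)^b$ rather than $\phi(a)\,\phi(b)^a$, but this does not affect the Schur--Zassenhaus conjugacy argument or anything downstream.
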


We call a subset $B$ of a group $A$ normal if $B$ is a union of conjugacy classes of $A$.
\begin{lemma}\label{2}
Let $A$ be a group of automorphisms of a finite group $G$ with $(|A|,|G|) = 1$. Suppose that $B$ is a normal subset of $A$ such that $A=\langle B\rangle$. Let $k\geq 1$ be an integer. 
Then $[G,A]$ is generated by the subgroups of the form $[G,b_1,\dots,b_k]$, where $b_1,\dots,b_k\in B$.
\end{lemma}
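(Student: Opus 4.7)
My plan is to induct on $k$. Set $K_k := \langle [G,b_1,\dots,b_k] : b_i \in B\rangle$; the goal is to verify $K_k = [G,A]$ for every coprime $A$-group satisfying the hypotheses.

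For the base case $k = 1$, each $[G,b]$ is normal in $G$ via the identity $[g,b]^{g'} = [gg',b]\,[g',b]^{-1}$, and the $A$-normality of $B$ gives $[G,b]^a = [G,b^a]$ with $b^a \in B$, so $K_1$ is $A$-invariant and hence normal in $G\langle A\rangle$. In the quotient $G/K_1$ every $b \in B$ acts trivially, and since $B$ generates $A$, so does $A$, yielding $[G,A] \le K_1$. The reverse inclusion is immediate, and coprimality is not needed at this stage.

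For the inductive step, assume the result for $k-1$ in every coprime $A$-group. The inclusion $K_k \le [G,A]$ is evident. For the reverse, pass to $\bar G := G/N$, where $N$ is the $G$-normal closure of $K_k$ (which is also $A$-invariant because $K_k$ is). In $\bar G$ the equality $[\bar G,b_1,\dots,b_k] = 1$ holds for every tuple in $B^k$, so for each choice of $(b_1,\dots,b_{k-1})$ the subgroup $[\bar G,b_1,\dots,b_{k-1}]$ is centralized by every $b_k \in B$, and since $B$ generates $A$, by all of $A$. Thus $\langle [\bar G,b_1,\dots,b_{k-1}] : b_i \in B\rangle \le C_{\bar G}(A)$, and the inductive hypothesis identifies this join with $[\bar G,A]$. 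Hence $[[\bar G,A],A] = 1$, and Lemma~\ref{1}(3) --- the place where coprimality enters in an essential way --- then forces $[\bar G,A] = [[\bar G,A],A] = 1$. Lifting back to $G$ we obtain $[G,A] \le N$, and combined with $K_k \le [G,A]$ this gives $[G,A] = N$.

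The most delicate point, and the main obstacle I anticipate, is promoting the equality $[G,A] = N$ to $[G,A] = K_k$; this reduces to verifying that $K_k$ itself is normal in $G$. Since the individual subgroups $[G,b_1,\dots,b_k]$ need not be $G$-invariant for $k \ge 2$, normality of $K_k$ does not follow from the generators alone. I would handle it by carrying the $G$-normality of $K_k$ through the induction alongside the equality, exploiting the $G$-normality of $K_{k-1} = [G,A]$ from the previous level together with standard commutator identities such as $[x,y]^z = [xz,y]\,[z,y]^{-1}$ to reabsorb $G$-conjugates of the iterated commutators back into $K_k$.
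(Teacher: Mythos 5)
Your descent step is sound and is essentially the paper's iteration in a different wrapper: passing to the quotient where all $[G,b_1,\dots,b_k]$ vanish, concluding that the $(k-1)$-fold commutators land in $C_{\bar G}(A)$, and killing $[\bar G,A]$ via $[\bar G,A]=[\bar G,A,A]$. But the gap you flag at the end is a genuine one, and the patch you sketch does not close it. Your argument proves $[G,A]=\langle K_k^{\,G}\rangle$, whereas the lemma asserts $[G,A]=K_k$; since $[G,A]$ is normal in $G$, the statement silently includes the normality of $K_k$, and that is exactly what is missing. Carrying ``$K_{k-1}=[G,A]$ is normal'' through the induction does not help: normality of the \emph{join} $K_{k-1}$ says nothing about the individual subgroups $[G,b_1,\dots,b_{k-1}]$, and the identity $[x,b]^{y}=[xy,b][y,b]^{-1}$ only controls conjugates of $[H,b]$ when the first entry ranges over a subgroup containing the conjugating element. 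Concretely, for $m\in M:=[G,b_1]$ and $g\in G$ one computes $[m,b_2]^{g}=[m^{g},c]\,[m^{g^{b_2^{-1}}},b_2]$ with $c=[g,b_2^{-1}]$, so the $G$-normal closure of $[G,b_1,b_2]$ picks up elements of $[[G,b_1],[G,b_2]]$, which are not visibly in $K_2$.

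The paper avoids this obstruction by reducing to a situation where normality is automatic: first to $G=[G,A]$ via Lemma~\ref{1}, then (by Goldschmidt's lemma) to the case where $G$ is an $A$-invariant Sylow $p$-subgroup, and finally to $G/\Phi(G)$, i.e.\ to an elementary abelian group, where every subgroup is normal and your centralizer descent (or the paper's equivalent iteration $[G,b_1,\dots,b_{k-1}]\le C_G(A)=1$, repeated down to $[G,b_1]=1$) finishes the proof. You should either import these two reductions --- each needs a short justification that the statement for the smaller group implies it for $G$, the Frattini step using the non-generator property --- or supply an actual proof that $K_k$ is normal in $G$; as it stands the last step of your proposal is an unproved assertion.
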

\begin{proof} In view of Lemma \ref{1} it can be assumed that $G=[G,A]$. According to \cite[Lemma 2.1]{golds} the subgroup $[G,A]$ is generated by the subgroups of the form $[P,A]$, where $P$ ranges over the $A$-invariant Sylow subgroups of $G$. Therefore without loss of generality we can assume that $G$ is a $p$-group. It is sufficient to prove the lemma for $G/\Phi(G)$ in place of $G$ so we may further  assume that $G$ is an elementary abelian $p$-group.
Let $H$ be the subgroup of $G$ generated by all the subgroups of the form $[G,b_1,\dots,b_k]$, where $b_1,\dots,b_k\in B$. We need to show that $G=H$. It is clear that $H$ is $A$-invariant.  Passing to the quotient $G/H$ we assume that $H=1$. Thus, $[G,b_1,\dots,b_k]=1$ for all $b_1,\dots,b_k\in B$ and, since $A=\langle B\rangle$, it follows that $[G,b_1,\dots,b_{k-1}]\leq C_G(A)$. Since $G$ is abelian and $G=[G,A]$, we deduce that $C_G(A)=1$ and so $[G,b_1,\dots,b_{k-1}]=1$. Now simply repeating the argument we obtain that also $[G,b_1,\dots,b_{k-2}]=1$ etc. Eventually we see that $[G,b_1]=1$ and hence $[G,A] =1$, as required.\qed
\end{proof}

  Let us call a subgroup $H$ of $G$ a tower of height $h$ if $H$ can be written as a product $H=P_1\cdots P_h$, where

(1) $P_i$ is a $p_i$-group ($p_i$ a prime) for $i=1,\dots,h$.

(2) $P_i$ normalizes $P_j$ for $i<j$.

(3) $[P_i,P_{i-1}]=P_i$ for $i=2,\dots,h$.

It follows from (3) that $p_i\neq p_{i+1}$ for $i=1,\dots,h-1$. A finite soluble group $G$ has Fitting height at least $h$ if and only if $G$ possesses a tower of height $h$ (see for example Section 1 in \cite{turull}).

We will need the following lemmas.

\begin{lemma}\label{1113} Let $G$ be a group and $y$ an element of $G$. Suppose $x_1,\dots,x_{k+1}$ are $\delta_k$-commutators in $G$ for  some $k\geq 0$. Then $[y,x_1,\dots,x_{k+1}]$ is a $\delta_{k+1}$-commutator.
\end{lemma}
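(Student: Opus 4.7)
The plan is to induct on $k$. The base case $k=0$ is immediate: $x_1$ is an arbitrary element (any element of $G$ is a $\delta_0$-commutator) and $[y,x_1]=\delta_1(y,x_1)$ by definition of $\delta_1$.

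The inductive step hinges on a small auxiliary observation that I would record first: for every $j\ge 1$, each $\delta_j$-value in any group is automatically a $\delta_{j-1}$-value. This follows by a quick subsidiary induction on $j$ from the recursion $\delta_j(\cdots)=[\delta_{j-1}(\cdots),\delta_{j-1}(\cdots)]$, and applied inductively it shows that a $\delta_k$-value is also a $\delta_i$-value for every $0\le i\le k$. This is the device that reconciles the hypothesis ``$x_i$ is a $\delta_k$-commutator'' with an inductive hypothesis that will want the $x_i$ to be $\delta_{k-1}$-commutators.

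Assuming the lemma holds at level $k-1$, let $x_1,\dots,x_{k+1}$ be $\delta_k$-commutators and $y\in G$ arbitrary. By the observation, $x_1,\dots,x_k$ are in particular $\delta_{k-1}$-commutators, so the inductive hypothesis yields that $[y,x_1,\dots,x_k]$ is a $\delta_k$-commutator. Since $x_{k+1}$ is also a $\delta_k$-commutator, the identity
$$[y,x_1,\dots,x_{k+1}]=\bigl[[y,x_1,\dots,x_k],\,x_{k+1}\bigr]$$
exhibits the left-hand side as a commutator of two $\delta_k$-commutators, which is by definition a $\delta_{k+1}$-commutator. This closes the induction.

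I do not anticipate a substantive obstacle: the argument is essentially definitional bookkeeping. The only point worth isolating is the downward chain among $\delta_j$-values, since without it the induction could not be fed its own output at the next level.
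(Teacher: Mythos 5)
Your proof is correct and follows essentially the same route as the paper's: the same induction on $k$, the same identity $[y,x_1,\dots,x_{k+1}]=[[y,x_1,\dots,x_k],x_{k+1}]$, and the same key observation that a $\delta_k$-commutator is a $\delta_i$-commutator for every $i\leq k$ (which the paper states without the subsidiary induction you spell out). Nothing further is needed.
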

\begin{proof} Note that $x_1,\dots,x_k,x_{k+1}$ can be viewed as $\delta_{i}$-commutators for each $i\leq k$. It is clear that $[y,x_1]$ is a $\delta_{1}$-commutator. Arguing by induction on $k$, assume that $k\geq 1$ and $[y,x_1,\dots,x_{k}]$ is a $\delta_{k}$-commutator. Then $ [y,x_1,\dots,x_{k},x_{k+1}] = [[y,x_1,\dots,x_k],x_{k+1}]$ is a $\delta_{k+1}$-commutator.\qed
\end{proof}

\begin{lemma}\label{12} Let $P_1\cdots P_h$ be a tower of height $h$. For every $1\leq k\leq h$ the subgroup $P_k$ is generated by $\delta_{k-1}$-commutators contained in $P_k$.
\end{lemma}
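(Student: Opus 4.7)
My plan is to proceed by induction on $k$. The base case $k=1$ is immediate: since $\delta_0=x_1$, every element of $P_1$ is a $\delta_0$-commutator, so $P_1$ is trivially generated by $\delta_0$-commutators contained in $P_1$.

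For the inductive step, assume $k\ge 2$ and that $P_{k-1}$ is generated by the set $B$ of $\delta_{k-2}$-commutators lying in $P_{k-1}$. Since the conjugate of any $\delta_{k-2}$-commutator is again a $\delta_{k-2}$-commutator and $P_{k-1}$ is closed under conjugation by itself, $B$ is a normal subset of $A:=P_{k-1}$, and $A=\langle B\rangle$ by the inductive hypothesis. Property (2) of a tower says that $A$ normalizes $G:=P_k$ and so acts on it by conjugation; the fact that $P_{k-1}$ and $P_k$ are $p$-groups for distinct primes gives $(|A|,|G|)=1$; and property (3) gives $[G,A]=P_k$. Applying Lemma~\ref{2} with its length parameter equal to $k-1$, we conclude that $P_k$ is generated by the subgroups $[P_k,b_1,\dots,b_{k-1}]$ as $b_1,\dots,b_{k-1}$ range over $B$.

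It then remains to identify each such $[P_k,b_1,\dots,b_{k-1}]$ as generated by $\delta_{k-1}$-commutators lying in $P_k$. This subgroup is generated by iterated commutators of the form $[y,b_1,\dots,b_{k-1}]$ (together with their conjugates in the ambient group); each such element lies in $P_k$ because $P_k$ is $P_{k-1}$-invariant, and since every $b_i$ is a $\delta_{k-2}$-commutator, Lemma~\ref{1113} (applied with $k-2$ in place of $k$) guarantees that $[y,b_1,\dots,b_{k-1}]$ is a $\delta_{k-1}$-commutator. Conjugates of $\delta_{k-1}$-commutators are again $\delta_{k-1}$-commutators (because $\delta_{k-1}$ is a word), so we obtain a generating set of the required kind inside $P_k$, closing the induction.

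I do not anticipate a real obstacle here. The key bookkeeping is to align the length parameter in Lemma~\ref{2} with the index requirement in Lemma~\ref{1113} so that the produced iterated commutators are exactly $\delta_{k-1}$-commutators, and to verify the (essentially automatic) conjugation-invariance of the set $B$ inside $P_{k-1}$ so that Lemma~\ref{2} applies.
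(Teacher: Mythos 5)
Your proof is correct and follows essentially the same route as the paper's: induction on $k$, taking $B$ to be the set of $\delta_{k-2}$-commutators in $P_{k-1}$, applying Lemma~\ref{2} (with length parameter $k-1$) to the coprime action of $P_{k-1}$ on $P_k$ using $P_k=[P_k,P_{k-1}]$, and then invoking Lemma~\ref{1113} to recognize the resulting iterated commutators as $\delta_{k-1}$-commutators. Your extra remarks on the normality of $B$ in $P_{k-1}$ and on conjugates of $\delta$-commutators only make explicit what the paper leaves implicit.
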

\begin{proof} If $k=1$ the lemma is obvious so we suppose that $k\geq 1$ and use induction on $k$. Thus, we assume that $P_{k-1}$ is generated by $\delta_{k-2}$-commutators contained in $P_{k-1}$. Denote the set of $\delta_{k-2}$-commutators contained in $P_{k-1}$ by $B$. Combining Lemma \ref{2} with the fact that $P_k=[P_k,P_{k-1}]$, we deduce that $P_k$ is generated by subgroups of the form $[P_k,b_1,\dots,b_{k-1}]$, where $b_1,\dots,b_{k-1}\in B$. The result is now immediate from Lemma \ref{1113}.\qed
\end{proof}

\begin{lemma}\label{13} Let $G$ be a group in which every $\delta_k$-commutator has order dividing $e$. Let $H$ be a subgroup of $G$ generated by a set of $\delta_k$-commutators and suppose that for some $j$ the derived subgroup $H^{(j)}$ has exponent $n$. Then $H$ has $\{e,j,n\}$-bounded exponent.
\end{lemma}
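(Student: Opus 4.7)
The plan is to induct on $j$, at each step reducing to the derived subgroup $H'$. A key preliminary observation is that every $\delta_{k+1}$-commutator in any group is also a $\delta_k$-commutator. To see this, argue by induction on $k$: given $\delta_{k+1}(a_1,\ldots,a_{2^{k+1}}) = [\delta_k(\mathbf{a}),\delta_k(\mathbf{b})]$, apply the inductive hypothesis to write each factor as a $\delta_{k-1}$-value, say $\delta_k(\mathbf{a}) = \delta_{k-1}(\mathbf{c})$ and $\delta_k(\mathbf{b}) = \delta_{k-1}(\mathbf{d})$; then $\delta_k(\mathbf{c},\mathbf{d}) = [\delta_{k-1}(\mathbf{c}),\delta_{k-1}(\mathbf{d})]$ equals the original $\delta_{k+1}$-value. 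The base case $k=0$ is immediate since a $\delta_0$-value is any group element. Consequently, the hypothesis that all $\delta_k$-values have order dividing $e$ forces all $\delta_{k'}$-values with $k'\geq k$ to have order dividing $e$ as well.

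With this in hand, induct on $j$. For $j=0$ the assertion is trivial since $H=H^{(0)}$ has exponent $n$. For $j\geq 1$, note that $H'$ is generated by the commutators $[x,y]$ where $x,y$ run through the given generating set of $\delta_k$-commutators of $H$. Each such $[x,y]$ is a $\delta_{k+1}$-commutator of $G$, hence by the preliminary observation a $\delta_k$-commutator, and so has order dividing $e$. Moreover $(H')^{(j-1)} = H^{(j)}$ has exponent $n$. Applying the inductive hypothesis to $H'$ (with $k+1$ in place of $k$ and $j-1$ in place of $j$) produces an $\{e,j-1,n\}$-bound, say $N$, on the exponent of $H'$.

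Finally, $H/H'$ is abelian and generated by the images of the $\delta_k$-commutators, each of order dividing $e$, so has exponent dividing $e$. Hence for every $h\in H$ we have $h^e\in H'$ and $h^{eN} = (h^e)^N = 1$, giving an $\{e,j,n\}$-bound on the exponent of $H$. Iterating the recursion yields the explicit value $e^{j}n$.

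The only mildly delicate point is the preliminary observation, because the induction on $j$ raises the parameter $k$ at each step while the bound in the conclusion must remain independent of $k$. This is handled precisely by the monotonicity in $k$ of the hypothesis ``every $\delta_k$-commutator has order dividing $e$''; everything else reduces to a standard extension-of-exponents argument.
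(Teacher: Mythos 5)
Your argument is correct and is essentially the paper's proof reorganized as an induction on $j$: the paper observes that a commutator of two $\delta_k$-commutators is again a $\delta_k$-commutator (the content of your ``preliminary observation''), concludes that every term of the derived series of $H$ is generated by $\delta_k$-commutators, hence that each quotient $H^{(i)}/H^{(i+1)}$ has exponent dividing $e$, and arrives at the same bound $e^jn$. One small inaccuracy to fix: $H'$ is not generated by the commutators $[x,y]$ with $x,y$ in the given generating set, but by their $H$-conjugates $[x,y]^h=[x^h,y^h]$; since a conjugate of a $\delta_k$-commutator is again a $\delta_k$-commutator, these conjugates are still $\delta_{k+1}$-commutators, so your induction goes through unchanged.
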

\begin{proof} Since a commutator of two $\delta_k$-commutators is again a $\delta_k$-commutator, it follows that all terms of the derived series of $H$ are generated by $\delta_k$-commutators. Therefore every quotient $H^{(i)}/H^{(i+1)}$ has exponent dividing $e$. We deduce now that $H/H^{(j)}$ has exponent dividing $e^j$ and so $H$ has exponent dividing $e^jn$.\qed
\end{proof}

Let us use the symbol $X_k(G)$ to denote the set of all $\delta_k$-commutators in a group $G$. We will now work under the following hypothesis.

\begin{hypothesis}\label{gaga}
Let $e$ and $k$ be positive integers. Assume that $G$ is a finite group such that $x^e=1$ for all $x\in X_k(G)$ and $P^{(k)}$ has exponent dividing $e$ for every $p\in\pi(G)$ and every Sylow $p$-subgroup $P$ of $G$.
\end{hypothesis}
We remark that, in view of Lemma \ref{13}, Hypothesis \ref{gaga} is equivalent to the existence of an $\{e,k\}$-bounded number $e_0$ such that the exponent of every nilpotent subgroup of $G$ generated by a subset of $X_k(G)$ divides $e_0$. It is convenient to work under Hypothesis \ref{gaga} as this condition is obviously inherited by quotients of $G$.

In what follows we will denote by  $F(G)$ the Fitting subgroup of a group $G$ and by $h(G)$ the Fitting height of $G$.

\begin{lemma}\label{height} Assume Hypothesis \ref{gaga}. If $G$ is soluble, then $h(G)$ is $\{e,k\}$-bounded.
 \end{lemma}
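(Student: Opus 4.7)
The plan is to apply the tower characterization of Fitting height: since $G$ is soluble with $h=h(G)$, it contains a tower $P_1\cdots P_h$ of height $h$ (recalled just before Lemma \ref{1113}). Bounding $h$ in terms of $e$ and $k$ is therefore equivalent to bounding the length of any such tower.

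The first step is to use Lemma \ref{12} to show that the upper layers $P_i$ have $\{e,k\}$-bounded exponent. By Lemma \ref{12}, each $P_i$ is generated by $\delta_{i-1}$-commutators contained in $P_i$, and for $i\geq k+1$ these are in particular $\delta_k$-commutators (since $\delta_{i-1}$ is obtained from $\delta_k$ by substituting further $\delta$-commutators for the variables of $\delta_k$). By the first clause of Hypothesis \ref{gaga} such commutators have order dividing $e$. Embedding $P_i$ into a Sylow $p_i$-subgroup $S$ of $G$ and invoking the second clause gives $P_i^{(k)}\leq S^{(k)}$ of exponent dividing $e$. Lemma \ref{13}, applied with $H=P_i$, $j=k$ and $n=e$, then yields an $\{e,k\}$-bounded integer $e_0$ such that $P_i$ has exponent dividing $e_0$ for every $i\geq k+1$. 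In particular each prime $p_i$ with $i\geq k+1$ lies in the bounded set $\pi_0=\pi(e_0)$.

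The second step is to bound the tail $h-k$ of the tower. Here the plan is to invoke the classical theorem (combining Hall--Higman's Theorem B with Zelmanov's positive solution of the restricted Burnside problem) that a finite soluble group of exponent $n$ has $n$-bounded Fitting height, and to apply it to the subgroup $H_0=\langle P_{k+1},\ldots,P_h\rangle$. Since the sub-tower $P_{k+1}\cdots P_h$ lies in $H_0$, we have $h(H_0)\geq h-k$, so it suffices to show that $H_0$ itself has $\{e,k\}$-bounded exponent. I would attempt to produce this bound by a prime-by-prime analysis: for each $p\in\pi_0$, use the coprime-action lemmas \ref{1} and \ref{2} together with Hall--Higman's Theorem B to bound both the $p$-length and the exponent of the Sylow $p$-subgroups of $H_0$ in terms of $e_0$; since $|\pi_0|$ is $\{e,k\}$-bounded, this yields a uniform exponent bound on $H_0$.

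The main obstacle is precisely this transfer of the exponent bound from the individual layers $P_i$ to the whole subgroup $H_0$: iterated wreath products such as $C_2\wr C_3\wr C_2\wr\cdots$ show that bounded layer exponents alone are not enough to bound Fitting height, so the argument must use Hypothesis \ref{gaga} applied to the whole ambient group $G$---in particular, that every $\delta_k$-commutator of $G$, not only those generating a single layer, has order dividing $e$---in order to rule out such configurations. A cleaner alternative that bypasses the direct tower analysis would be to establish a Focal-Subgroup-Theorem-type statement for $w=\delta_k$: if each Sylow subgroup of $G^{(k)}$ is generated by $\delta_k$-commutators of $G$, then by Hypothesis \ref{gaga} and Lemma \ref{13} those Sylow subgroups have $\{e,k\}$-bounded exponent, and since the primes dividing $|G^{(k)}|$ must divide $e$ (each such Sylow contains a non-trivial $\delta_k$-commutator of order dividing $e$), $G^{(k)}$ itself would have $\{e,k\}$-bounded exponent. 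Then the Hall--Higman--Zelmanov theorem bounds $h(G^{(k)})$ in terms of $e$ and $k$, and since $G/G^{(k)}$ has derived length at most $k$, hence Fitting height at most $k$, one concludes $h(G)\leq h(G^{(k)})+k$.
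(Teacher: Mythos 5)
You set the proof up exactly as the paper does: take a tower $P_1\cdots P_h$ realizing $h=h(G)$, use Lemma \ref{12} to see that each $P_i$ with $i\geq k+1$ is generated by $\delta_k$-commutators, and aim to bound the exponent of the upper part $P_{k+1}\cdots P_h$ so that the Hall--Higman theorem (Fitting height of a finite soluble group is bounded in terms of its exponent) finishes the argument, the lower part $P_1\cdots P_k$ contributing at most $k$. But at the decisive step you only obtain an exponent bound for the individual layers $P_i$, and, as you yourself observe via iterated wreath products, that is not enough. Neither of your two suggestions closes the gap. The ``prime-by-prime analysis'' is not carried out: you never explain how Lemmas \ref{1} and \ref{2} or Hall--Higman's Theorem B would produce an exponent bound for a Sylow $p$-subgroup of $H_0=P_{k+1}\cdots P_h$, and coprime action alone cannot do it. The alternative route is conditional on a focal-subgroup-type statement for $\delta_k$ that you do not prove; as the introduction points out, the Focal Subgroup Theorem disposes of the case $w=[x,y]$ easily, and it is precisely the absence of such a statement for general $\delta_k$ that makes the problem hard. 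Moreover, that route would immediately yield the $\{e,k\}$-boundedness of the exponent of $G^{(k)}$, i.e.\ essentially Lemma \ref{imp}, of which the present lemma is only a preliminary step.

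The paper's proof diverges from yours at exactly this point: it applies Lemma \ref{13} not to the layers $P_i$ but to the Sylow $p$-subgroups $P$ of the whole product $P_{k+1}\cdots P_h$. Such a $P$ is contained in a Sylow $p$-subgroup of $G$, so $P^{(k)}$ has exponent dividing $e$ by the second clause of Hypothesis \ref{gaga}, and Lemma \ref{13} with $j=k$ and $n=e$ (together with the generation by $\delta_k$-commutators coming from Lemma \ref{12}) gives an $\{e,k\}$-bound on the exponent of $P$. Since this holds for every prime, the exponent of $P_{k+1}\cdots P_h$ itself is $\{e,k\}$-bounded, Hall--Higman bounds its Fitting height, and $h$ is bounded. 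So the missing idea in your write-up is to run the Lemma \ref{13} argument at the level of the Sylow subgroups of the entire upper segment of the tower rather than at the level of its individual layers.
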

\begin{proof} Assume that $G$ is soluble and let $h=h(G)$. Choose a tower $P_1\cdots P_h$ of height $h$ in $G$.  Without loss of generality we assume that $h\geq k$. Lemma \ref{12} tells us that for every $i\geq k+1$ the subgroup $P_i$ is generated by $\delta_k$-commutators contained in $P_i$ (we use here that whenever $i\geq k$ every $\delta_i$-commutator is also a $\delta_k$-commutator). Let $P$ be a Sylow $p$-subgroup of $P_{k+1}\cdots P_h$. By Lemma \ref{13} we conclude that the exponent of $P$ is $\{e,k\}$-bounded. This is true for every Sylow $p$-subgroup of $P_{k+1}\cdots P_h$. Thus, the exponent of $P_{k+1}\cdots P_h$ is $\{e,k\}$-bounded. According to the Hall-Higman theory  \cite{hahi} the Fitting height of a finite soluble group of exponent $n$ is $n$-bounded so we deduce that the Fitting height of $P_{k+1}\cdots P_h$ is $\{e,k\}$-bounded. It is clear that the Fitting height of $P_1\cdots P_{k}$ is at most $k$. Therefore $h$ (the Fitting height of $P_1\cdots P_h$) is $\{e,k\}$-bounded. The lemma follows.\qed
\end{proof}

Let $G$ be a group and $w=w(x_{1},\ldots,x_{n})$ a word. The marginal subgroup $w^{*}(G)$ of $G$ corresponding to the word $w$ is defined as the set of all $a\in G$ such that  
$$w(g_{1},\ldots,ag_{i},\ldots,g_{n})=w(g_{1},\ldots,g_{i}a,\ldots,g_{n})=w(g_{1},\ldots,g_{i},\ldots,g_{n}),$$ for all $g_{1},\ldots,g_{n} \in G$ and $ 1\leq i\leq n$. It is well known that $w^{*}(G)$ is a characteristic subgroup of $G$ and that $[w^{*}(G),w(G)]=1.$
If $w$ is a multilinear commutator, then $w^{*}(G)$ is precisely the set $S$ such that  $w(g_{1},\ldots, g_{n})=1$ whenever at least one of the elements $g_{1},\ldots,g_{n}$ belongs to $S$. A proof of this can be found in \cite[Theorem 2.3]{turner}.
The following helpful lemma was communicated to me by C. Acciarri and G. A. Fern\'andez-Alcober.
\begin{lemma}\label{easy}  Let $w$ be a multilinear commutator and $G$ a group with a normal subgroup $N$ that contains no nontrivial $w$-values. Then $[N,w(G)]=1$.
\end{lemma}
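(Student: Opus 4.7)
The plan is to show that $N$ is contained in the marginal subgroup $w^{*}(G)$; once we have this, the conclusion $[N,w(G)]=1$ follows immediately from the general fact $[w^{*}(G),w(G)]=1$ already recorded before the lemma.

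To prove $N\subseteq w^{*}(G)$, I would use the characterization of $w^{*}(G)$ for multilinear commutators quoted from Turner's paper: an element $a$ lies in $w^{*}(G)$ if and only if every evaluation of $w$ in which at least one argument is replaced by $a$ equals $1$. So fix $a\in N$, arbitrary elements $g_{1},\dots,g_{n}\in G$, and an index $i$, and consider the value $w(g_{1},\dots,g_{i-1},a,g_{i+1},\dots,g_{n})$. I claim this value lies in $N$. This is the key step, and it is proved by a straightforward induction on the nesting depth of the multilinear commutator $w$. Write $w=[u,v]$, where $u$ and $v$ are multilinear commutators on disjoint sets of variables; the indeterminate $x_{i}$ (the one being specialized to $a$) appears in exactly one of them, say in $u$. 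By induction, the specialization of $u$ in which $x_{i}=a$ already yields an element of $N$. Hence
\[
w(g_{1},\dots,g_{i-1},a,g_{i+1},\dots,g_{n}) = [u(\dots,a,\dots),\,v(\dots)]
\]
is a commutator of an element of $N$ with an element of $G$, and normality of $N$ forces it to belong to $N$. The base case $w=[x_{1},x_{2}]$ is immediate from normality.

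Once we know the specialized value lies in $N$, the hypothesis that $N$ contains no nontrivial $w$-values forces it to equal $1$. Since the choice of $g_{j}$ and of the index $i$ was arbitrary, Turner's criterion gives $a\in w^{*}(G)$. Thus $N\subseteq w^{*}(G)$, and therefore
\[
[N,w(G)] \subseteq [w^{*}(G),w(G)] = 1,
\]
as required. I do not foresee any genuine obstacle; the only thing to be careful about is making the structural induction on $w$ precise, using that in a multilinear commutator each indeterminate occurs exactly once, so that the substituted variable is confined to one side of the outermost bracket at each stage.
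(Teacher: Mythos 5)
Your argument is correct and follows essentially the same route as the paper: both show that any $w$-value with at least one argument in $N$ lies in $N$ (by normality and multilinearity), conclude these values are trivial, invoke Turner-Smith's characterization to get $N\subseteq w^{*}(G)$, and finish with $[w^{*}(G),w(G)]=1$. Your explicit structural induction merely spells out the step the paper states in one line.
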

\begin{proof} Let $w=w(x_1,\dots,x_k)$. Since $N$ is normal in $G$ and $w$ is a multilinear commutator, it follows that $w(g_1,\dots,g_k)$  belongs to $N\cap G_{w}$ whenever at least one of the elements $g_1,\dots,g_k$ belongs to $N$. Thus by the hypothesis all such elements must be trivial. It follows that $N\subseteq w^{*}(G)$, where $w^*(G)$ is the marginal subgroup of $G$ corresponding to $w$. The result is now clear since $w^{*}(G)$ always commutes with $w(G)$.$\quad$ \qed
\end{proof}

\begin{lemma}\label{imp} Assume Hypothesis \ref{gaga}. If $G$ is soluble, then the exponent of $G^{(k)}$ is $(e,k)$-bounded.
\end{lemma}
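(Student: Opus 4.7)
The plan is to induct on the Fitting height $h=h(G)$, which by Lemma \ref{height} is already $\{e,k\}$-bounded under Hypothesis \ref{gaga}. The base case $h=1$ is immediate: a nilpotent finite group is the direct product of its Sylow subgroups $P_p$, so $G^{(k)}=\prod_p P_p^{(k)}$ has exponent dividing $e$ by Hypothesis \ref{gaga}.

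For the inductive step, put $F=F(G)$. Since Hypothesis \ref{gaga} is inherited by quotients and $h(G/F)=h-1$, induction produces an $\{e,k\}$-bounded integer $m$ with $\exp((G/F)^{(k)})\leq m$. As $(G/F)^{(k)}=G^{(k)}F/F$, every $m$th power of an element of $G^{(k)}$ already lies in $F$, so $\exp G^{(k)}$ divides $m\cdot\exp(G^{(k)}\cap F)$, and the problem reduces to bounding $\exp(G^{(k)}\cap F)$. Because $F$ is nilpotent, $F=\prod_p F_p$ and $G^{(k)}\cap F=\prod_p(G^{(k)}\cap F_p)$, so one is reduced to bounding $\exp(G^{(k)}\cap F_p)$ for each prime $p$.

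For a fixed $p$, the plan is to exploit the coprime action of a Hall $p'$-complement $Q$ of $G$ on the normal $p$-subgroup $F_p$. Lemma \ref{1} gives the decomposition $F_p=[F_p,Q]\,C_{F_p}(Q)$, and Lemma \ref{2} (iterated to depth $k+1$) describes $[F_p,Q]$ as generated by iterated commutators $[y,b_1,\ldots,b_{k+1}]$ with $y\in F_p$ and the $b_i$ in a normal generating set of $Q$. The tower structure of Lemma \ref{12}, together with Lemma \ref{height} applied inside $G/F$, should allow one to arrange the $b_i$ to be $\delta_k$-commutators of $G$; Lemma \ref{1113} then turns each iterated commutator $[y,b_1,\ldots,b_{k+1}]$ into a $\delta_{k+1}$-commutator, hence a $\delta_k$-commutator of order dividing $e$. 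Since $G^{(k)}\cap[F_p,Q]$ is a $p$-group sitting inside some Sylow $p$-subgroup whose $k$th derived subgroup has exponent $e$, Lemma \ref{13} then furnishes an $\{e,k\}$-bounded exponent for this piece.

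The main obstacle is the centralizer piece $G^{(k)}\cap C_{F_p}(Q)$, for which the coprime decomposition supplies no obvious $\delta_k$-generating set. Here the key tool is Lemma \ref{easy}: if $G^{(k)}\cap C_{F_p}(Q)$ contains no nontrivial $\delta_k$-value then it lies in $Z(G^{(k)})$, and finishing the proof requires analysing the resulting central extension of a group of $\{e,k\}$-bounded exponent by an elementary abelian $p$-group. A minimal-counterexample reduction---exploiting a hypothetical unique minimal normal subgroup $M$ of $G$, distinguishing the case $M\not\leq G^{(k)}$ (where $G^{(k)}$ embeds into $G/M$) from the case $M\leq G^{(k)}\leq C_G(M)$ (where $G/C_G(M)$ is a soluble linear group of derived length at most $k$)---should force any offending prime $p$ to divide $e$, thereby keeping $\exp(C_{F_p}(Q)\cap G^{(k)})$ bounded by $e$. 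I expect this central-extension step to be the most technically delicate part of the argument.
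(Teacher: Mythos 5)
Your opening reduction (induction on the Fitting height, bounded by Lemma \ref{height}, and passing to the quotient by $F(G)$ to reduce everything to $G^{(k)}\cap F(G)$) matches the start of the paper's argument. But from that point on your plan both overcomplicates the problem and, by your own admission, does not close. The decomposition $F_p=[F_p,Q]\,C_{F_p}(Q)$ via a Hall $p'$-complement is not what is needed: for Lemma \ref{1113} to convert the iterated commutators $[y,b_1,\dots,b_{k+1}]$ into $\delta_{k+1}$-commutators you need the $b_i$ to be $\delta_k$-commutators, and a Hall $p'$-complement $Q$ of $G$ has no reason to be generated by $\delta_k$-commutators (Lemma \ref{12} produces such generators only for the upper floors $P_{k+1},\dots,P_h$ of a tower, not for an arbitrary Hall subgroup). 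Worse, your use of Lemma \ref{easy} on $G^{(k)}\cap C_{F_p}(Q)$ is not legitimate, since that lemma requires a \emph{normal} subgroup of $G$ and $C_{F_p}(Q)$ is not normal in $G$ in general. The concluding ``minimal-counterexample reduction'' is speculative: even if an ``offending prime'' $p$ were forced to divide $e$, that would say nothing about the exponent of a $p$-group, and you give no mechanism for bounding the central extension you identify as the delicate point.

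The idea you are missing is much simpler and uses Lemma \ref{easy} globally rather than prime by prime. Let $N=F(G)$ and let $M$ be the subgroup generated by \emph{all} $\delta_k$-commutators of $G$ lying in $N$. Then $M$ is normal in $G$, nilpotent (being inside $F(G)$), and generated by $\delta_k$-values, so by the remark following Hypothesis \ref{gaga} its exponent is $\{e,k\}$-bounded; hence one may factor it out and assume $N$ contains no nontrivial $\delta_k$-values. Now Lemma \ref{easy} applies to the normal subgroup $N$ itself and gives $[N,G^{(k)}]=1$, so $G^{(k)}\cap N\le Z(G^{(k)})$ and $G^{(k)}/Z(G^{(k)})$ has $\{e,k\}$-bounded exponent by the inductive hypothesis. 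The ``central-extension step'' you flag as the technical obstacle is then dispatched by Mann's theorem \cite{mann}: the exponent of $(G^{(k)})'$ is bounded in terms of the exponent of $G^{(k)}/Z(G^{(k)})$, so after factoring out $(G^{(k)})'$ one is left with an abelian group generated by elements of order dividing $e$. Mann's theorem is exactly the tool your proposal lacks, and without it (or an equivalent) the argument cannot be completed along the lines you describe.
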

\begin{proof} Assume that $G$ is soluble and let $h=h(G)$. By Lemma \ref{height},  $h$ is $\{e,k\}$-bounded. If $G$ is nilpotent, the result is immediate so we assume that $h\geq 2$ and use induction on $h$. Let $H=G^{(k)}$ and $N=F(G)$. By induction the exponent of $HN/N$ is $\{e,k\}$-bounded. Let $M$ be the subgroup generated by all $\delta_k$-commutators of $G$ contained in $N$. By the remark made after Hypothesis \ref{gaga}, the exponent of $M$ is $\{e,k\}$-bounded, too. Let us pass to the quotient $G/M$ and assume that $M=1$. Lemma \ref{easy} tells us that in this case $[N,H]=1$. Hence, the exponent of $H/Z(H)$ is $\{e,k\}$-bounded. Mann showed that in any finite group $K$ the exponent of the derived group $K'$ is bounded in terms of the exponent of $K/Z(K)$ \cite{mann}. Using Mann's result, we deduce that $H'$ has $\{e,k\}$-bounded exponent. Therefore we can pass to the quotient $G/H'$ and without loss of generality assume that $H$ is abelian. Since $H$ is generated by elements of order dividing $e$, it follows that the exponent of $H$ divides $e$, as required.\qed
\end{proof}

Combining Lemma \ref{imp} with Lemma \ref{13} we deduce the following.
\begin{lemma}\label{clar} Assume Hypothesis \ref{gaga}. If $T$ is a soluble subgroup of $G$ such that $T$ is generated by a set of $\delta_k$-commutators, then the exponent of $T$ is $\{e,k\}$-bounded.
\end{lemma}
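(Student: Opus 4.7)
The plan is to verify that Hypothesis \ref{gaga} is inherited by the subgroup $T$ and then simply to chain Lemma \ref{imp} with Lemma \ref{13}. First I would observe that every $\delta_k$-commutator of $T$ is in particular a $\delta_k$-commutator of $G$, so its order divides $e$. Likewise, any Sylow $p$-subgroup $Q$ of $T$ is a $p$-subgroup of $G$ and hence contained in some Sylow $p$-subgroup $P$ of $G$; since $Q^{(k)} \leq P^{(k)}$ and the latter has exponent dividing $e$ by Hypothesis \ref{gaga}, the same is true for $Q^{(k)}$. Hence Hypothesis \ref{gaga} holds for $T$ in place of $G$.

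Since $T$ is soluble and satisfies Hypothesis \ref{gaga}, Lemma \ref{imp} applied with $T$ in place of $G$ gives that $\exp(T^{(k)})$ is $\{e,k\}$-bounded. As $T$ is generated by $\delta_k$-commutators of $G$, all of which lie in $T$ and have order dividing $e$, Lemma \ref{13} applied with $H = T$ and $j = k$ (taking $n$ to be the bound just obtained for $\exp(T^{(k)})$) yields that $\exp(T)$ itself is $\{e,k\}$-bounded, as required.

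The lemma is thus a formal consequence of the preceding two results, and there is essentially no obstacle once one checks that Hypothesis \ref{gaga} passes to subgroups. The only mild point worth noting is the Sylow condition, and this is immediate because the $k$-th derived subgroup of a subgroup of a $p$-group sits inside the $k$-th derived subgroup of the ambient $p$-group, and the condition on $\delta_k$-commutators is clearly hereditary.
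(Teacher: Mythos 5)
Your proof is correct and takes essentially the same route as the paper: Lemma \ref{clar} is stated there as an immediate consequence of combining Lemma \ref{imp} with Lemma \ref{13}, with no further details given. Your write-up supplies exactly the routine verification this combination requires — namely that Hypothesis \ref{gaga} is inherited by subgroups (via $X_k(T)\subseteq X_k(G)$ and $Q^{(k)}\leq P^{(k)}$ for a Sylow $p$-subgroup $Q$ of $T$ inside a Sylow $p$-subgroup $P$ of $G$) — before applying Lemma \ref{imp} to $T$ and then Lemma \ref{13} with $H=T$ and $j=k$.
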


\section{Exponent of $G^{(k)}$ in the case of arbitrary groups}

Let $G$ be a finite group and $k$ a positive integer. As in  \cite{deltacom} we will associate with $G$ a triple of numerical parameters $n_k(G)=(\lambda,\mu,\nu)$ where
the parameters $\lambda,\mu,\nu$ are defined as follows.

If $G$ is of odd order, we set $\lambda=\mu=\nu=0$. Suppose that $G$ is
of even order and choose a Sylow 2-subgroup $P$ in $G$. 
If the derived length $dl(P)$ of $P$ is at most $k+1$, we define
$\lambda=dl(P)-1$. Put $\mu=2$ if $X_{\lambda}(P)$
contains elements of order greater than two and $\mu=1$ otherwise. We
let $\nu=\mu$ if $X_{\lambda}(P)\not\subseteq Z(P)$ and $\nu=0$ if
$X_{\lambda}(P)\subseteq Z(P)$.

If the derived length of $P$ is at least $k+2$, we define $\lambda=k$. 
Then $\mu$ will denote the number with the property that $2^\mu$ is the
maximum of orders of elements in $X_k(P)$. Finally, we let $2^{\nu}$ be
the maximum of orders of commutators $[a,b]$, where $b\in P$ and $a$ is an
involution in a cyclic subgroup generated by some element from $X_k(P)$.
 
The set of all possible triples $n_k(G)$ is naturally endowed with the lexicographical order. Following the terminology used by Hall and Higman \cite{hahi} we call a group $G$ monolithic if it has a unique minimal normal subgroup which is non-abelian simple. In the modern literature such groups are very often called ``almost simple". For the proof of the next proposition see \cite{deltacom}.

\begin{proposition}\label{4} Let $k\geq 1$ and $G$ a group of
even order such that $G$ has no nontrivial normal soluble
subgroups. Then $G$ possesses a normal subgroup $L$ such that $L$ is
residually monolithic and $n_k(G/L)<n_k(G)$.
\end{proposition}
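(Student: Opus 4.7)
The plan is to construct $L$ from the socle of $G$. Since $G$ has no nontrivial normal soluble subgroup, the socle $S=\mathrm{soc}(G)$ is a direct product $S_1\times\cdots\times S_r$ of non-abelian simple groups, and $C_G(S)=1$ (any nontrivial normal subgroup of $G$ meets $S$, while $C_G(S)\cap S\leq Z(S)=1$). Conjugation therefore embeds $G$ into $\mathrm{Aut}(S)=\prod_T(\mathrm{Aut}(T)\wr \mathrm{Sym}_{r_T})$. Let $B$ be the normal subgroup of $\mathrm{Aut}(S)$ consisting of automorphisms that fix each factor $S_i$ setwise, so $B=\prod_i\mathrm{Aut}(S_i)$, and set $L=G\cap B$. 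I would first check that $L$ is residually monolithic: since $S\leq G\cap B$, each $S_i$ lies in $L$, and for the $i$-th projection $\pi_i\colon B\to\mathrm{Aut}(S_i)$ one has $\pi_i(L)\supseteq\mathrm{Inn}(S_i)$, so $\pi_i(L)$ is an almost simple, hence monolithic, subgroup of $\mathrm{Aut}(S_i)$. As $\bigcap_i\ker\pi_i=1$, any $1\neq\ell\in L$ survives some $\pi_i$, yielding a monolithic quotient of $L$ that separates $\ell$ from the identity.

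Next comes the comparison $n_k(G/L)<n_k(G)$. The quotient $G/L$ embeds into $\mathrm{Aut}(S)/B\leq\prod_j\mathrm{Sym}_{r_j}$ and is essentially a permutation group. Fix a Sylow $2$-subgroup $P$ of $G$, and let $\bar P=PL/L$ be the associated Sylow $2$-subgroup of $G/L$. Every $\delta_k$-commutator in $\bar P$ lifts to one in $P$, so the three coordinates of $n_k(G/L)$ are bounded coordinate-wise by those of $n_k(G)$; the content of the proposition is the strict drop in at least one of them. If $G\leq B$ then $L=G$, the quotient $G/L$ is trivial, and $n_k(G/L)=(0,0,0)<n_k(G)$ since $|G|$ is even; so one may assume $G\not\leq B$.

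The strict drop in the remaining case is the main obstacle. By Feit--Thompson every non-abelian simple group has even order, so $P\cap L$ contains a nontrivial Sylow $2$-subgroup of $S$ and is therefore genuinely smaller than $P$. My approach is a case analysis on whether $dl(P)\leq k+1$ or $dl(P)\geq k+2$. In the first case one argues that the terminal derived subgroup $P^{(dl(P)-1)}$ lies inside the ``base'' $P\cap L$, using that in the wreath product $\mathrm{Aut}(S)$ commutators between base-group and top-group elements return to the base; this forces $dl(\bar P)<dl(P)$ and so $\lambda$ drops strictly. In the second case, where $\lambda$ is already capped at $k$, one locates an element of $X_k(P)$ of maximal $2$-power order, or the involution and conjugating partner $b\in P$ realising $\nu$, inside $L$, so that $\mu$ or $\nu$ drops. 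The delicate point throughout is verifying that the relevant extremal witness can indeed be placed in $L$ (or, if not, that $L$ may be enlarged to a possibly larger residually monolithic normal subgroup of $G$ that absorbs it), and this is where I expect the bulk of the technical work to reside.
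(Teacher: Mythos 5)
Your construction of $L$ is fine as far as it goes: taking $L=\bigcap_i N_G(S_i)$ (equivalently $G\cap B$ in your notation), observing that $C_G(\mathrm{soc}(G))=1$, and deducing that $L$ is residually monolithic via the projections $L\to L/C_L(S_i)\hookrightarrow \mathrm{Aut}(S_i)$ is correct and is indeed the standard first step (the paper does not prove this proposition itself but imports it from \cite{deltacom}, where the residually monolithic subgroup arises in essentially this way). The problem is that everything after that point --- the strict inequality $n_k(G/L)<n_k(G)$ --- is precisely the content of the proposition, and your proposal does not prove it; you say yourself that this is ``where I expect the bulk of the technical work to reside.'' A proof attempt that defers the only hard step is not a proof, and here the hard step is not routine.

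Moreover, the sketch you do give for the strict drop contains steps that do not hold up. First, the assertion that ``commutators between base-group and top-group elements return to the base'' forces $dl(\overline{P})<dl(P)$ is a non sequitur: $P$ is not in general a split extension of $P\cap L$ by $\overline{P}=PL/L$, and even for a split extension $B\rtimes Q$ with $B$ abelian one can have $dl(B\rtimes Q)=dl(Q)$ (this happens exactly when $Q'$ centralizes $[B,Q]$), so some genuine input about how $P\cap\mathrm{soc}(G)$ sits inside $P$ is required. Second, in the case $dl(P)\geq k+2$ there is no reason why an extremal witness for $\mu$ or $\nu$ should lie in $L$, and your fallback of ``enlarging $L$ to absorb it'' is exactly where the difficulty lives: the enlargement must remain normal and residually monolithic, which is not addressed. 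Finally, even the preliminary claim that the coordinates of $n_k(G/L)$ are bounded by those of $n_k(G)$ needs an argument, because $\mu$ and $\nu$ are defined differently in the two regimes $dl(P)\leq k+1$ and $dl(P)\geq k+2$; the lexicographic monotonicity $n_k(G/N)\leq n_k(G)$ is itself a lemma in \cite{deltacom}, not an observation. As it stands, your proposal establishes the existence of a residually monolithic normal subgroup but not the decrease of the invariant, so the induction in Proposition \ref{789} could not be run on it.
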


The next lemma is taken from \cite{vari}. The proof is based on Lie-theoretic techniques created by Zelmanov.
The lemma plays a crucial role in the proof of Lemma \ref{55} which in turn is fundamentally important for the proof of Theorem A.
\begin{lemma}\label{22222} Let $G$ be a group in which every
${\delta_k}$-commutator is of order dividing $e$. Let $H$
be a nilpotent subgroup of $G$ generated by a set of
$\delta_k$-commutators. Assume that $H$ is in fact $m$-generated
for some $m\geq 1$. Then the order of $H$ is $\{e,k,m\}$-bounded.
\end{lemma}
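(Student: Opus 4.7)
The strategy is to reduce to a single Sylow $p$-subgroup and then apply Zelmanov's Lie-theoretic solution of the restricted Burnside problem. Since $H$ is finite and nilpotent, it factors as a direct product $H=H_{p_1}\times\cdots\times H_{p_s}$ of its Sylow subgroups. As $H$ is generated by elements of order dividing $e$, each prime $p_i$ divides $e$, so $s$ is $e$-bounded. Each factor $H_{p_i}$ is a quotient of $H$, hence $m$-generated; and its $\delta_k$-commutators, being images of $\delta_k$-commutators of $H$, have order dividing the $p_i$-part of $e$. It therefore suffices to bound $|H_p|$ in $\{e,k,m\}$-terms for a single finite $p$-group $H_p$ that is $m$-generated and in which every $\delta_k$-commutator has order dividing some fixed $p^a\leq e$.

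Next I aim to bound the exponent of $H_p$. By the Burnside basis theorem one may select an $m$-element generating set of $H_p$ drawn from its $\delta_k$-commutator generators. I would then pass to the associated graded Lie algebra $L=L(H_p)$ with respect to a $p$-central filtration on $H_p$ (for instance the Jennings filtration), so that $L$ is a restricted Lie algebra over the prime field of characteristic $p$, generated by $m$ elements. The group identity asserting that every $\delta_k$-commutator is annihilated by $p^a$ translates, via the Magnus-Lazard dictionary, into a polynomial (restricted) identity on $L$. Zelmanov's theorem, applied to $m$-generated Lie algebras satisfying a polynomial identity together with the Engel-type condition inherent to a finite $p$-group, then yields that $L$ is nilpotent of $\{e,k,m\}$-bounded class. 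This transfers to a bound on the nilpotency class of $H_p$, hence on its derived length $d$. Since $H_p^{(d)}=1$ has exponent $1$, Lemma~\ref{13} gives an $\{e,k,m\}$-bound on $\exp(H_p)$.

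With the exponent of $H_p$ bounded, Zelmanov's solution of the restricted Burnside problem provides an $\{e,k,m\}$-bound on $|H_p|$, and multiplying over the $e$-boundedly many Sylow factors produces the desired bound on $|H|$. The principal obstacle is the Lie-theoretic middle step: converting the $\delta_k$-commutator condition into a workable polynomial (restricted) identity on $L$, and then bringing Zelmanov's classification to bear to bound the nilpotency class. The rest of the argument is a fairly standard chain of reductions, but this step is where the full strength of Zelmanov's Lie-theoretic machinery is indispensable.
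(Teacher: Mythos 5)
The paper does not prove this lemma; it is quoted from \cite{vari}, where the argument indeed runs through the Zassenhaus--Jennings--Lazard Lie algebra $L_p(H)$ and Zelmanov's Lie-theoretic machinery, so your overall architecture (reduce to a Sylow $p$-subgroup with $p\mid e$, pass to $L=L_p(H_p)$, extract a polynomial identity from the group law $\delta_k(x_1,\dots,x_{2^k})^e=1$, apply Zelmanov) is the right one. But there is a genuine gap at the central step. Zelmanov's nilpotency theorem does not apply to an $m$-generated Lie algebra satisfying a PI plus ``the Engel-type condition inherent to a finite $p$-group'': every finite $p$-group already has a nilpotent (indeed finite-dimensional) associated Lie algebra, so any Engel condition it satisfies holds with an index depending on the particular group, which yields no bound at all. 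The hypothesis you must verify is that \emph{every Lie commutator in the chosen generators of $L$ is ad-nilpotent of $\{e,k\}$-bounded index}, and this is exactly where the $\delta_k$-structure enters: a group commutator of two $\delta_k$-commutators is a $\delta_{k+1}$-commutator, hence again a $\delta_k$-commutator, hence of order dividing $e$; and for the restricted algebra $L_p(H_p)$ an element $x$ of order $p^s$ has $\mathrm{ad}(\bar x)^{p^s}=\mathrm{ad}\bigl(\bar x^{[p]^s}\bigr)=0$. Your proposal never invokes the closure of $\delta_k$-values under commutation, so the decisive hypothesis of Zelmanov's theorem is not established and the bounded nilpotency class of $L$ does not follow.

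The endgame also contains a step that would fail: nilpotency of $L_p(H_p)$ of class $c$ does \emph{not} transfer to a bound on the nilpotency class of $H_p$ (for the Jennings filtration the $i$th layer $D_i/D_{i+1}$ involves $p$-th powers, so $L$ is not generated by its first layer as an ordinary Lie algebra, and killing long Lie products only forces $\gamma_{c+1}(H_p)$ into deeper terms of the filtration, not to be trivial). The standard, and much shorter, conclusion is a counting one: an $m$-generated nilpotent Lie algebra of class $c$ is spanned by the left-normed commutators of weight at most $c$ in the generators, so $\dim_{\mathbb F_p}L\le m+m^2+\cdots+m^c$, and since $|H_p|=p^{\dim L}$ with $p\le e$, the order of $H_p$ is $\{e,k,m\}$-bounded directly --- no detour through the derived length, Lemma~\ref{13}, or a second application of the restricted Burnside problem is needed.
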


The lemma that follows partially explains why Proposition \ref{4} is
important for the proof of Theorem A. The proof can be found in \cite{deltacom}.

\begin{lemma}\label{55} There exist  $\{e,k\}$-bounded numbers
$\lambda_0,\mu_0,\nu_0$ with the property that if $G$ is a group
in which every ${\delta_k}$-commutator is of order dividing $e$, then
$n_k(G)\leq(\lambda_0,\mu_0,\nu_0)$.
\end{lemma}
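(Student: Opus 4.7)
My plan is to bound each coordinate of $n_k(G) = (\lambda, \mu, \nu)$ separately, directly from the definition. First, $\lambda \leq k$ always (it is either $dl(P) - 1$ with $dl(P) \leq k+1$, or equal to $k$ by fiat), so one may take $\lambda_0 = k$. Moreover, if $dl(P) \leq k+1$ then both $\mu$ and $\nu$ already lie in $\{0,1,2\}$ by definition, so there is nothing to prove in that regime. The entire content of the lemma lies in the complementary case $dl(P) \geq k+2$, where $\lambda = k$ and one must bound $\mu$ and $\nu$ using the hypothesis on $\delta_k$-commutators.

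In that case, the bound on $\mu$ is easy. By definition $2^\mu$ is the largest order of an element of $X_k(P) \subseteq X_k(G)$, and every element of $X_k(G)$ has order dividing $e$ by hypothesis. Hence $2^\mu$ divides $e$ and $\mu \leq \log_2 e$.

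The real work is the bound on $\nu$. Here $2^\nu$ is the maximum of the orders of commutators $[a,b]$ with $b \in P$ and $a$ an involution lying in $\langle x \rangle$ for some $x \in X_k(P)$. My plan is to apply Lemma \ref{22222} to the subgroup $K = \langle x, x^b \rangle$. Since $x$ is a $\delta_k$-commutator and conjugates of $\delta_k$-commutators are again $\delta_k$-commutators (by the identity $\delta_k(g_1,\ldots,g_{2^k})^b = \delta_k(g_1^b,\ldots,g_{2^k}^b)$, proved by a straightforward induction on $k$), the element $x^b$ is also a $\delta_k$-commutator, so $K$ is generated by a set of $\delta_k$-commutators; and $K \leq P$ is nilpotent and $2$-generated. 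Lemma \ref{22222} therefore bounds $|K|$ in terms of $\{e,k\}$. Because $a$ lies in $\langle x\rangle \leq K$ and $a^b$ lies in $\langle x^b\rangle \leq K$, the commutator $[a,b] = a^{-1}a^b$ belongs to $K$, so its order is at most $|K|$. This yields an $\{e,k\}$-bound for $2^\nu$ and hence for $\nu$. The hard part of the plan is precisely this last step: without the Lie-theoretic input from Lemma \ref{22222} (built on Zelmanov's work), I see no way to control the order of $[a,b]$, since $b$ itself need not be a $\delta_k$-commutator and so the naive subgroup $\langle x, b\rangle$ is not within reach of that lemma. The device of replacing $b$ by $x^b$ is what brings the problem into the scope of Lemma \ref{22222}.
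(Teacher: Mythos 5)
Your proposal is correct: $\lambda\le k$ and the bound on $\mu$ are immediate from the definitions, and your device of applying Lemma~\ref{22222} to the $2$-generated nilpotent subgroup $\langle x, x^b\rangle$ (noting $[a,b]=a^{-1}a^b$ lies in it) is exactly the right way to control $\nu$. The paper itself defers the proof to \cite{deltacom}, but it explicitly identifies Lemma~\ref{22222} as the crucial ingredient in that proof, and your argument matches that intended route.
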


\begin{proposition}\label{789} Under Hypothesis \ref{gaga} the exponent of $G^{(k)}$ is $\{e,k\}$-bounded.
\end{proposition}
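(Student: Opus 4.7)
The plan is to proceed by induction on the triple $n_k(G)$, which is $\{e,k\}$-bounded by Lemma \ref{55}, with respect to the lexicographic order. In the base case $n_k(G)=(0,0,0)$ the group $G$ has odd order; the Feit--Thompson theorem then makes $G$ soluble and Lemma \ref{imp} supplies the required bound.

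For the inductive step, let $R$ denote the soluble radical of $G$; Lemma \ref{imp} applied to $R$ bounds the exponent of $R^{(k)}$. The quotient $\bar G=G/R$ inherits Hypothesis \ref{gaga} and has trivial soluble radical. If $\bar G$ is trivial we are already done; otherwise Proposition \ref{4} produces a normal subgroup $\bar L$ of $\bar G$ that is residually monolithic and satisfies $n_k(\bar G/\bar L)<n_k(\bar G)\leq n_k(G)$. Writing $L$ for the preimage of $\bar L$ in $G$, the inductive hypothesis applied to $G/L$ yields an $\{e,k\}$-bound on the exponent of $(G/L)^{(k)}$.

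The residually monolithic section $\bar L=L/R$ is handled through its monolithic quotients. Each such quotient $M$ has non-abelian simple socle $S$ in which every $\delta_k$-commutator has order dividing $e$; invoking the classification of finite simple groups together with the techniques developed in \cite{deltacom} (which build on Lemma \ref{22222}), one concludes that $|S|$ is $\{e,k\}$-bounded, and since $\mathrm{Out}(S)$ is soluble by Schreier's conjecture, $|M|$ is $\{e,k\}$-bounded as well. Because $\bar L$ embeds as a subdirect product of such monolithic groups, its exponent is $\{e,k\}$-bounded. Combining the three layers, a bounded power of any element $x\in G^{(k)}$ lies in $L$, a further bounded power lies in $R$, and a careful application of Lemmas \ref{imp} and \ref{clar} to the soluble normal intersection $G^{(k)}\cap R$ closes the induction.

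The principal obstacle is twofold. First, the simple-group input, namely forcing a non-abelian simple $S$ satisfying Hypothesis \ref{gaga} to have $\{e,k\}$-bounded order, depends on the classification of finite simple groups and on the Zelmanov-style Lie-theoretic methods underlying Lemma \ref{22222}. Second, the final bookkeeping step is delicate: Lemma \ref{imp} bounds $R^{(k)}$ but not the possibly larger intersection $G^{(k)}\cap R$, so controlling that intersection requires a careful analysis of how $\delta_k$-commutators project across the normal series $1\leq R\leq L\leq G$ in order to extract a uniform exponent bound on $G^{(k)}$ itself rather than merely on each successive quotient.
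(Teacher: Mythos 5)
Your overall architecture matches the paper's: induction on the triple $n_k(G)$ via Lemma \ref{55}, the Feit--Thompson base case with Lemma \ref{imp}, and the reduction of the semisimple layer to finitely many monolithic groups via Proposition \ref{4} (the paper gets the finiteness from Jones's theorem on varieties generated by simple groups rather than from a direct CFSG order bound, but that is a cosmetic difference). The problem is the last step, which you correctly flag as delicate but then leave unresolved, and it is precisely the point where the real idea of the proof lives. After the reductions you know that a bounded power of every $x\in G^{(k)}$ lands in $G^{(k)}\cap R$, but neither Lemma \ref{imp} (which controls $R^{(k)}$) nor Lemma \ref{clar} (which requires a subgroup \emph{generated by} $\delta_k$-commutators) applies to $G^{(k)}\cap R$: an element of that intersection need not be a product of $\delta_k$-commutators lying in $R$, nor need it lie in $R^{(k)}$, so no "careful analysis of how $\delta_k$-commutators project across the normal series" along the lines you sketch will bound its order directly. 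As written, the induction does not close.

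The paper's way around this is not to bound the exponent of $G^{(k)}\cap S$ at all (where $S$ is the soluble radical). Instead, let $T$ be the subgroup generated by all $\delta_k$-commutators of $G$ that happen to lie in $S$; this $T$ is soluble and generated by $\delta_k$-commutators, so its exponent is $\{e,k\}$-bounded, and one may pass to $G/T$. In the quotient, $S$ contains no nontrivial $\delta_k$-commutators, so Lemma \ref{easy} gives $[S,G^{(k)}]=1$, i.e.\ $S\cap G^{(k)}\leq Z(G^{(k)})$. Since $G^{(k)}S/S$ already has bounded exponent, $G^{(k)}/Z(G^{(k)})$ has bounded exponent, and Mann's theorem bounds the exponent of $G^{(k+1)}$; factoring that out leaves $G^{(k)}$ abelian and generated by elements of order dividing $e$, which finishes the proof. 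This combination of Lemma \ref{easy} with Mann's theorem is the missing ingredient in your argument (it is also exactly how Lemma \ref{imp} itself handles the analogous intersection in the soluble case, so the pattern was available to imitate).
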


\begin{proof} According to Lemma \ref{55} the number of all
triples that can occur as $n_k(G)$ is $\{e,k\}$-bounded. We therefore
can use induction on $n_k(G)$. If $n_k(G)=(0,0,0)$, then $G$ has odd order.
By the Feit-Thompson Theorem \cite{fetho} $G$ is soluble, so the conclusion holds by Lemma \ref{imp}. Hence, we assume that $n_k(G)>(0,0,0)$ and there exists an $\{e,k\}$-bounded number $E_0$ with the property that if $L$ is a normal subgroup such that $n_k(G/L)<n_k(G)$, then the exponent of $G^{(k)}L/L$ is at most $E_0$.

Suppose first that $G$ has no nontrivial normal soluble
subgroups. Proposition \ref{4} tells us that $G$ possesses 
a normal subgroup $L$ such that $L$ is residually monolithic and
$n_k(G/L)<n_k(G)$. A result of Jones \cite{Jones} says that any infinite family of finite simple 
groups generates the variety of all groups.  Observe that every monolithic group is isomorphic to a subgroup of  $\mathrm{Aut}(H)$, where $H$ is the non-abelian simple group isomorphic to the unique minimal normal subgroup of the monolithic group.  Thus by Jones' result  we have only finitely many possibilities to choose the group $H$  in which every ${\delta_k}$-commutator is of order dividing $e$. It follows that up to isomorphism there exist only finitely many monolithic groups in which every ${\delta_k}$-commutator is of order dividing $e$.  The exponent of such monolithic groups is $\{e,k\}$-bounded and so $L$ is residually of $\{e,k\}$-bounded exponent. Therefore $L$ has $\{e,k\}$-bounded exponent.  We conclude that the exponent of $G^{(k)}$ is $\{e,k\}$-bounded.

Now let us drop the assumption that $G$ has no nontrivial normal soluble subgroups. Let $S$ be the product of all normal soluble subgroups of $G$. The above paragraph shows that $G^{(k)}S/S$ has $\{e,k\}$-bounded exponent. Let $T$ be the subgroup generated by all $\delta_k$-commutators contained in $S$. By Lemma \ref{imp} the exponent of $T$ is $\{e,k\}$-bounded. Passing to the quotient $G/T$ we can assume that $T=1$. Combining our hypothesis with Lemma \ref{easy} we conclude that $S\cap G^{(k)}\leq Z(G^{(k)})$. Hence, the exponent of $G^{(k)}/Z(G^{(k)})$ is $\{e,k\}$-bounded. We now use Mann's theorem \cite{mann} and deduce that $G^{(k+1)}$ has $\{e,k\}$-bounded exponent. Therefore we can pass to the quotient $G/G^{(k+1)}$ and without loss of generality assume that $G^{(k)}$ is abelian. Since $G^{(k)}$ is generated by elements of order dividing $e$, it follows that the exponent of $G^{(k)}$ is $\{e,k\}$-bounded, as required.\qed
\end{proof}

\section{Proof of Theorem A}

The number of distinct indeterminates used in the expression of a multilinear commutator $w$ is the weight of $w$. The following lemma is taken from \cite{lola}.

\begin{lemma}\label{um} Let $G$ be a group and $w$ a multilinear
commutator of weight $k$. Every $\delta_k$-commutator in $G$ is
a $w$-value.
\end{lemma}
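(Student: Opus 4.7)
The plan is to proceed by induction on $k$, the weight of $w$. The base case $k=1$ is immediate: then $w=x_1$, so every element of $G$, and in particular every $\delta_1$-commutator, is a $w$-value.

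Before tackling the inductive step, I would record the following auxiliary observation: for all integers $j\ge i\ge 0$, every $\delta_j$-commutator is a $\delta_i$-commutator. This is proved by a short induction on $j$, with all admissible $i$ handled simultaneously. Writing a $\delta_j$-commutator as $[c,d]$ with $c,d$ being $\delta_{j-1}$-commutators, the inductive hypothesis (used with parameter $i-1\le j-1$) rewrites $c$ and $d$ as $\delta_{i-1}$-commutators $\delta_{i-1}(f_1,\dots,f_{2^{i-1}})$ and $\delta_{i-1}(f_{2^{i-1}+1},\dots,f_{2^i})$; then the very definition of $\delta_i$ gives $[c,d]=\delta_i(f_1,\dots,f_{2^i})$, a $\delta_i$-commutator. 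The cases $i=0$ and $i=j$ are trivial.

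For the main inductive step, suppose $k\ge 2$ and the lemma is known for all multilinear commutators of smaller weight. Any multilinear commutator $w$ of weight $k$ has the form $w=[u,v]$, where $u$ and $v$ are multilinear commutators on disjoint sets of indeterminates, of weights $k_u$ and $k_v$ respectively, with $k_u,k_v\ge 1$ and $k_u+k_v=k$; in particular $k_u,k_v\le k-1$. A $\delta_k$-commutator has the form $[a,b]$ with $a,b$ being $\delta_{k-1}$-commutators, and since $k-1\ge k_u$ and $k-1\ge k_v$, the auxiliary observation shows that $a$ is a $\delta_{k_u}$-commutator and $b$ is a $\delta_{k_v}$-commutator. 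The inductive hypothesis applied to $u$ and $v$ then says that $a$ is a $u$-value and $b$ is a $v$-value, so $[a,b]$ is a value of $w=[u,v]$, as required.

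No serious obstacle is anticipated; the argument is essentially a bookkeeping exercise on the commutator tree of $w$. The one place that demands a little care is the auxiliary observation, which is precisely what allows the reduction from $\delta_{k-1}$-commutators to $\delta_{k_u}$- and $\delta_{k_v}$-commutators and thereby opens the door to applying the inductive hypothesis to both halves of the decomposition $w=[u,v]$.
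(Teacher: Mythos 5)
Your proof is correct, and it is the standard argument for this fact; the paper itself gives no proof here, quoting the lemma from the earlier reference \cite{lola}, where essentially the same induction on the weight of $w$ (decomposing $w=[u,v]$ and using that $\delta_j$-commutators are $\delta_i$-commutators for $j\ge i$) is carried out. The one point worth keeping explicit, as you do, is that $u$ and $v$ involve disjoint indeterminates, so $a$ and $b$ can be realized as a $u$-value and a $v$-value simultaneously.
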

\begin{lemma}\label{dois} Let $w$ be a multilinear commutator of weight $k$,  and let $G$ be a soluble group of derived length at most $k$. Suppose that all $w$-values in $G$ have order dividing $e$. Then the exponent of the verbal subgroup $w(G)$ is $\{e,k\}$-bounded.
\end{lemma}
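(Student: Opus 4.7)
The plan is to argue by induction on the derived length $d=dl(G)$, with $1\le d\le k$. The base case $d=1$ is immediate: if $k\ge 2$, then $G$ abelian forces $w(G)=1$; if $k=1$, then $w=x_1$ and the hypothesis gives $\exp(w(G))=\exp(G)\le e$.

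For the inductive step, assume $d\ge 2$ and set $N=G^{(d-1)}$, a nontrivial abelian normal subgroup. The quotient $G/N$ has derived length $d-1\le k$, and images of $w$-values are still $w$-values of order dividing $e$, so by induction $\exp(w(G/N))=\exp(w(G)N/N)$ is $\{e,k\}$-bounded. Now let $T$ be the subgroup generated by all $w$-values of $G$ that lie in $N$. Since the set of $w$-values is closed under conjugation and $N$ is normal, $T$ is normal in $G$; being a subgroup of the abelian group $N$ generated by elements of order dividing $e$, we have $\exp(T)\mid e$. Pass to $\bar G=G/T$: any $w$-value of $\bar G$ contained in $\bar N=N/T$ lifts to a $w$-value of $G$ lying in $N$, hence to an element of $T$, so $\bar N$ contains no nontrivial $w$-values. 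Lemma \ref{easy} therefore yields $[\bar N,w(\bar G)]=1$, so $\bar N\cap w(\bar G)\le Z(w(\bar G))$. Since $w(\bar G)/(\bar N\cap w(\bar G))$ is isomorphic to the image of $w(\bar G)$ in $\bar G/\bar N\cong G/N$, which equals $w(G/N)$, we conclude that $w(\bar G)/Z(w(\bar G))$ has $\{e,k\}$-bounded exponent.

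From here the finish mirrors the proof of Lemma \ref{imp}: Mann's theorem bounds $\exp(w(\bar G)')$ in terms of $\exp(w(\bar G)/Z(w(\bar G)))$, and after quotienting by $w(\bar G)'$ the image of $w(\bar G)$ is abelian and generated by elements of order dividing $e$, so its exponent divides $e$. Combining with $\exp(T)\mid e$ yields an $\{e,k\}$-bound on $\exp(w(G))$. The only non-formal point is verifying that $\bar N$ contains no nontrivial $w$-values, which is what makes Lemma \ref{easy} applicable; the main obstacle is bookkeeping, namely checking that the recursion produces a bound depending on $e$ and $k$ alone and not on $d$, but this is automatic since we perform only $d-1\le k-1$ inductive steps, each composing a single application of Mann's bound.
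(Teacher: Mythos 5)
Your proposal is correct and follows essentially the same route as the paper: quotient by the subgroup generated by the $w$-values lying in the last nontrivial term of the derived series, invoke Lemma \ref{easy} to centralize it, use induction on the derived length to bound the exponent of $w(G)$ modulo its center, and finish with Mann's theorem and the abelian case. Your version merely spells out the inductive bookkeeping (via $G/N$ rather than $G/C_G(w(G))$) that the paper leaves implicit.
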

\begin{proof} Let $T$ be the last nontrivial term of the derived series of $G$. Since $T$ is abelian, the subgroup generated by all $w$-values contained in $T$ has exponent dividing $e$. Passing to the quotient by this subgroup, we can assume that no $w$-value lies in $T\setminus\{1\}$. Now Lemma \ref{easy} shows that $[T,w(G)]=1$. The induction on the derived length of $G$ tells us that $w(G)/Z(w(G))$, the image of $w(G)$ in $G/C_G(w(G))$, has bounded exponent. Therefore Mann's theorem \cite{mann} implies that the derived group $w(G)'$ has $\{e,k\}$-bounded exponent. We can now pass to the quotient $G/w(G)'$ and assume that $w(G)$ is abelian. But now it is clear that since all $w$-values in $G$ have order dividing $e$, the exponent of $w(G)$ must divide $e$, too.\qed
\end{proof}
Theorem A is now immediate. 
\begin{proof} Suppose that $w$ is a multilinear commutator and $G$ is a finite group in which any nilpotent subgroup generated by $w$-values has exponent dividing $e$. By Lemma \ref{um} there exists $k\geq 1$ such that every $\delta_k$-commutator is a $w$-value. This $k$ depends only on $w$. Obviously $G$ satisfies Hypothesis \ref{gaga}. Proposition \ref{789} now tells us that the exponent of $G^{(k)}$ is $\{e,k\}$-bounded. It is straightforward from Lemma \ref{dois} that the exponent of $w(G)/G^{(k)}$ is likewise $\{e,k\}$-bounded. The proof is complete.\qed
\end{proof}



\end{document}